\newtheorem{definition}{Definition}
\newtheorem{proposition}{Proposition}
\newtheorem{theorem}{Theorem}
\newtheorem{corollary}{\noindent Corollary}
\begin{document}

\title{Duality and free measures in vector spaces, the spectral theory of actions of non-locally compact groups}

\author{A.~M.~Vershik
 \thanks{%
 St.~Petersburg Department of Steklov Institute of Mathematics, St.~Petersburg State University, Institute for Information Transmission Problems. \emph{E-mail}: vershik@pdmi.ras.ru.}
\thanks{%
Supported by the RFBR grant 17-01-00433.
}
}

\date{}
\maketitle

\rightline{\it \emph{To the memory of V.~N.~Sudakov}}

\begin{abstract}
The paper presents a general duality theory for vector measure spaces taking its origin in the author's papers written in the 1960s. The main result establishes a direct correspondence between the geometry of a measure in a vector space and the properties of the space of measurable linear functionals on this space regarded as closed subspaces of an abstract space of measurable functions. An example of useful new features of this theory is the notion of a free measure and its applications.

\emph{Keywords}: vector measure space, space of linear measurable functionals, free measure.
\end{abstract}

\tableofcontents

\section{Introduction}

The axiomatics of vector measure spaces discussed in this paper was conceived and partially described in the author's papers written in the 1960s, but its version presented below differs substantially from that given in those papers (\cite{V1,V2,V3,V4,V5}). The point is that a structure of a vector space, on which a measure and integration are considered, is introduced into a ready (and standard) measure space. That is, we introduce a structure of a vector space on an abstract measure space, and not the reverse.

This approach has numerous advantages compared with the traditional one, where a measure or integration are introduced at the last moment into a space overloaded with other structures, making some fundamental and difficult problems related to the measure 
even more complicated. The sixth volume of the Bourbaki series, {\it Integration}, is a telling illustration of how simple questions become complicated if one ignores categorical and probabilistic properties of measures. Our approach has a distinctly categorical nature, but, unfortunately, the category of measure spaces is usually ignored (presumably, because of its deceptive simplicity). A later realization of the same idea (see \cite{V} and subsequent papers) is to regard spaces endowed with a measure and a metric as spaces with a fixed measure and a varying metric (admissible triples). The most difficult and, in the author's opinion, still poorly studied part of measure theory is the geometry of families of $\sigma$-algebras. It was initiated by V.~A.~Rokhlin \cite{P}, who gave a classification of single $\sigma$-algebras (measurable partitions, measurable functions). The classification of vector measure spaces discussed in this paper is an example of a classification of approximately the same level of complexity. A much more difficult example of such a problem is the classification of filtrations (see \cite{V6}).

In the 1960s, there was a rather extensive literature on measure theory in infinite-dimensional vector spaces, a burst of interest in this field being evoked a little earlier by applications to physics (continuous integrals), functional analysis (generalized functions and generalized random processes), and the geometry of vector measure spaces proper (extension of measures, quasi-invariance of measures, etc.); see, e.g., \cite{V0,GGVil,Shil,Scor}  the references therein. The main achievement of those years in this direction was the important theorems due to V.~V.~Sazonov and R.~A.~Minlos on the extension of weak distributions to measures, initiated, respectively, by A.~N.~Kolmogorov and Yu.~V.~Prokhorov, and I.~M.~Gelfand: in a Hilbert space (Sazonov) and in a nuclear space (Minlos).

However, in those years, little attention was paid to similar classical probabilistic examples, as well as to connections to linear functional analysis and especially to general measure theory; as a result, the special nature of the situation with integration in infinite-dimensional spaces was somewhat overestimated: actually, general measure theory contained much of what is needed for constructing a special theory in such spaces. It is the latter fact that was emphasized in the author's papers on duality theory written in those years.

It  is relevant to recall that this paper on duality was presented at the International Congress of Mathematicians in Moscow in 1966
 \cite{V3}. The session was chaired by J.~Dieudonn\'e, the author of the remarkable fifth volume of the  Bourbaki series titled {\it Topological Vector Spaces} and concerned mainly with topological duality of locally convex vector spaces. He met the idea of extending the notion of duality to vector measure spaces with approval. On the contrary, L.~Schwartz, who worked at the time on the sixth volume titled {\it Integration}, did not accept these ideas.\footnote{After the congress, at which l made an acquaintance with J.~Dieudonn\'e, I conceived an idea which we implemented together with V.~N.~Sudakov: to write a letter to N.~Bourbaki paying compliments to this author for his enormous undertaking  of writing on the whole mathematics. We meticulously studied some of the Borbaki volumes at G.~P.~Akilov's seminar. I asked my friends, who had a very good knowledge of French, to translate the letter, and we sent it. After a while, each of us received as a gift a volume by Bourbaki. My copy was signed with two different pens, which I regarded as a confirmation of collective authorship. Soon, the sixth volume arrived, which, unfortunately, I did not like.}

It happened that in the late 1950s, V.~N.~Sudakov and I began to study measure theory in linear spaces. A wave of Moscow interest to this subject reached us through D.~A.~Raykov. V.\,N.\ started working, quite successfully, on quasi-invariant measures and other geometric problems (which afterwards evolved into working on the difficult problem of continuity of realizations of Gaussian processes, which he solved). In the late 1960s, V.\,N. and I wrote a large paper \cite{V0} on the general theory of extension of measures in vector topological spaces, which summarized our considerations on this subject. Even earlier, in the early 1960s, I started to think, proceeding from Rokhlin's theory of Lebesgue spaces, on structural measure theory in vector spaces, which resulted in the duality theory presented here in a revised form and the concept of a free vector measure. Brief notes and papers published in those years have not achieved sufficient prominence, though they were appraised by our seminar. The author hopes that this publication will, for one thing, revive the interest in important problems of  measure and integration theory in vector spaces and in the conceptual setting of these problems. Besides, the author intends to draw attention to new problems closely related to this theory, which are presented in the last section of the paper and which still remain poorly studied. I mean relations to representation theory and actions of non-locally compact groups. 

The author is grateful to V.~N.~Bogachev for his interest to the paper and to A.~A.~Lodkin for his help with the list of references.

I would like to repeat what I have said about V.\,N.\ in the sad days soon after his death.

 \vspace{5mm}

Vladimir Nikolaevich Sudakov (Volik, Volodya) is one of the most talented mathematicians of his generation. He blossomed early and brightly. During the whole siege, he remained in Leningrad with his parents. He graduated from school with a gold medal, being a winner of many olimpiads, and entered the Department of Mathematics and Mechanics of the Leningrad State University. It is then that we got acquainted. The easiness with which he grasped new mathematical ideas, the speed with which he solved problems inspired admiration. He was mainly attracted to geometric problems, but tried to learn all interesting and new things, to attend all kinds of courses. Eventually, he settled on functional analysis, as the most geometric branch of analysis in a wide sense, and our common first advisor was Gleb Pavlovich Akilov. Volodya formed a close friendship with him, and, undoubtedly, was influenced by him not only in mathematics. He wrote his first paper (published as a note in {\it Uspekhi Matematicheskikh Nauk}), with a simplification of the classical compactness criterion, being a fourth-year student. In the late 1950s, we were united by our interest in measure theory in functional spaces, a subject initiated by A.~N.~Kolmogorov and I.~M.~Gelfand that took root in Leningrad due to D.~A.~Raykov. We wrote a large survey paper on this subject, which has been repeatedly used afterwards. Volodya was an active member of our small seminar headed by Akilov (V.~P.~Khavin, B.~M.~Makarov, V.\,N., and I), which existed for several years and at which we discussed some Bourbaki volumes and other new things. At the beginning, V.\,N.\ took an active part in the ergodic seminar headed by V.~A.~Rokhlin (by the way, Rokhlin served as an opponent to the defense of both his theses). Later, V.\,N.\ offered him a co-authorship of a planned book with geometric content.

His post graduate advisor was L.~V.~Kantorovich, who at the time supervised the computational field in which Volodya majored at the University. Then he worked at the Leningrad (St.~Petersburg) Department of the Steklov Institute of Mathematics in the Laboratory of Statistical Methods headed by Yu.~V.~Linnik (and later by I.~A.~Ibragimov), but  while working on the theory of random processes, he preserved his own taste and circle of interests. He devoted a lot of time to the extremely difficult problem of continuity of realizations of a random Gaussian process and solved it by a method developed by himself. Another his widely known contribution is a theorem on the existence of an exact solution to the Monge--Kantorovich problem in the Euclidean case. A wide range of probabilists know his papers on Gaussian measures (written in part jointly with B.~Tsirelson and I.~A.~Ibragimov). And, of course, his monograph  \cite{S} achieved much prominence.  Note also that recently, his  paper  \cite{DS}, joint with L.~N.~Dovbysh, on Gram--de Finetti matrices has become quite popular.

 \section{Definitions}

\subsection{Definition of a vector measure space}

In what follows, we will consider vector spaces over the field of real numbers.

\begin{definition} Let $E$ be a vector space and assume that there is a  fixed linear space of homomorphisms of $E$ to the finite-dimensional vector spaces ${\Bbb R}^n$, $n \in \Bbb N$, which separates the points of $E$ (i.e., every point has a nonzero image under at least one homomorphism). Consider the collection of the inverse images of the Borel sets of ${\Bbb R}^n$ with respect to these homomorphisms and the $\sigma$-algebra $\mathcal A$ of subsets of $E$ generated by them. Assume that on 
 $\mathcal A$ there is a probability measure $\mu$ such that the triple $(E, {\mathcal A}, \mu)$ is isomorphic as a measure space to a Lebesgue--Rokhlin space. In this case, we say that $(E, {\mathcal A}, \mu)$ is a vector measure space.
\end{definition}

In fact, we have just defined the category of vector measure spaces: its objects are triples described above, and morphisms are measure-preserving linear homomorphisms. More exactly, by a morphism of objects we mean a linear map of spaces defined on a linear set of full measure and preserving the measure (or, in a somewhat different category, the type of the measure). We emphasize that this is a definition of morphisms \!\!\!$\mod 0$, i.e., the vector space itself is defined only up to choosing a vector subspace of full measure in the original space. Isomorphisms are defined in the same way as invertible morphisms.

In the category of vector measure spaces, one needs not care about special (e.g., topological) properties of spaces supporting the measure. But if a given vector space $E$ is also a locally convex topological separable space, then, as a system of $\sigma$-algebras in the above definition, one can take the $\sigma$-algebra of cylinder sets, i.e., sets generated by {\it continuous} linear functionals. This $\sigma$-algebra coincides with the $\sigma$-algebra of Borel sets, hence a locally convex separable space endowed with a Borel $\sigma$-additive measure is a vector measure space in our sense. It is slightly more difficult to check that an arbitrary separable metric linear topological space --- a so-called Fr\'echet space, not necessarily locally convex, endowed with a Borel probability measure --- is also a linear measure space in the sense described above.

The topology of vector spaces is not involved in our axiomatics and is used only in examples. We also do not consider  the important problem, once quite popular, of when a finitely additive measure (or, in the old terminology, a \textit{weak distribution}) defined on the algebra of cylinder sets of a vector (or vector topological) space can be extended in the same vector space to a countably additive measure, and deal only with countably additive measures.

\subsection{The space of linear measurable functionals}

Of importance for us is the following notion of a {\it linear measurable functional on a vector measure space}.

   \begin{definition}
A measurable function $f$ (more exactly, the class of functions coinciding \!\!\!$\mod 0$ with $f$) on a vector measure space $(E,\mu)$ is called a \textit{linear measurable functional} (l.m.f.)\ if there exists a vector subspace $E_f \subset E$ of full measure ($\mu(E_f)=1$)
on which some individual function from the class of $f$ is linear in the ordinary sense. Denote the space of l.m.f.\ by $\mbox{\rm{Lin}}(E,\mu)$. The homomorphisms from the definition of the $\sigma$-algebra of a vector measure space are linear measurable functionals, but, as a rule, they are far from exhausting the space
   $\mbox{\rm{Lin}}(E,\mu)$.
\end{definition}

Of course, as usually, by a functional we mean a class of functionals that pairwise coincide almost everywhere; the space $\mbox{\rm{Lin}}(E,\mu)$ is a subspace of the space $L_0(E,\mu)$ of all measurable functionals on $(E,\mu)$, and we endow it with the topology of convergence in measure. In some cases, for example, for Gaussian measures or for sequences of independent functionals, convergence in this topology coincides with convergence in the Hilbert space $L^2(E,\mu)$.

One sees from the definition that for a finite or countable set of linear measurable functionals, one can find a set of full measure on which all these functionals are linear. But the peculiarity of the notion of a l.m.f.\ is that in the most interesting case of infinite-dimensional spaces, the domain of definition of a linear measurable functional is individual: there exists no vector subspace of full measure on which all linear measurable functionals are defined simultaneously. In this, the duality theory described here differs fundamentally from the well-known and more complicated duality theory of topological vector spaces (see Vol.~V of N.~Bourbaki's treatise {\it Elements of Mathematics}). For more on these differences, see below. 

Consider a classical example. Let $\xi_n$, $n=1,2, \dots$, be a countable sequence of independent random variables taking the values $\pm 1$ with probabilities $1/2$. The Bernoulli measure $\mu$ corresponding to this sequence can be regarded as a measure on the linear space $l_{\Bbb N}^{\infty}$ (more exactly, on the set of vertices of the unit cube (ball) of this space). Endowing  $l_{\Bbb N}^{\infty}$ with the $\sigma$-algebra generated by the Borel sets in 
the weak $(l_{\Bbb N}^{\infty}, l_{\Bbb N}^1)$ topology, we obtain a vector measure space in the sense of the above definition.

By a corollary to the classical three-series theorem, the space of linear measurable functionals coincides with $l_{\Bbb N}^2$;
these functionals can be expressed as series         $$f(x)=\sum_{n\in \Bbb N} f_nx_n;\quad x=\{x_n\}\in
         l_{\Bbb N}^{\infty},\quad f=\{f_n\} \in l_{\Bbb N}^2,$$
that converge in measure $\mu$ (as well as almost everywhere and in the mean).\footnote{Observe that, not quite obviously, $l_{\Bbb N}^2$ is the space of  \textbf{all} measurable linear functionals. This does not follow from the three-series theorem, which deals only with series, but does follow from the structure of the whole space of measurable functions on an infinite product of spaces.} At the same time, the space of continuous linear functionals in the sense of the weak $(l_{\Bbb N}^{\infty}, l_{\Bbb N}^1)$  topology on $l_{\Bbb N}^{\infty}$ 
 is a smaller space, namely, $l_{\Bbb N}^1$.  A single space on which all measurable linear functionals are defined is $l_{\Bbb N}^2$ (regarded as a subset of $l_{\Bbb N}^{\infty}$), but it has zero measure. Thus there is no set of full measure on which all linear measurable functionals are defined. This example is typical.

The definition of a vector measure space immediately implies the following corollary.

    \begin{corollary}
In every vector measure space there are sufficiently many linear measurable functionals. 
   \end{corollary}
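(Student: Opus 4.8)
The plan is to read off a separating, $\sigma$-algebra-generating family of linear measurable functionals directly from the data defining a vector measure space, and to interpret ``sufficiently many'' as the statement that the l.m.f.\ separate the points of $E$ (mod $0$) and that the $\sigma$-algebra they generate is all of $\mathcal{A}$ (mod $0$). This is the reading that the ensuing duality theory actually requires.

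First I would unpack the definition. By hypothesis there is a fixed linear space $H$ of homomorphisms $h : E \to \mathbb{R}^n$ (with $n$ depending on $h$) which separates the points of $E$, and $\mathcal{A}$ is generated by the sets $h^{-1}(B)$ with $B$ Borel in $\mathbb{R}^n$. Writing each $h$ in components as $h = (h_1, \dots, h_n)$, each coordinate $h_i : E \to \mathbb{R}$ is an ordinary linear functional defined on all of $E$. In particular $h_i$ is linear on the full-measure subspace $E_{h_i} = E$, so it trivially belongs to $\mathrm{Lin}(E,\mu)$; this is the easy, everywhere-defined end of the notion, where none of the delicate mod-$0$ domain issues that complicate infinite-dimensional l.m.f.\ arise.

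Next I would verify the two properties for this subfamily. Since the Borel $\sigma$-algebra of $\mathbb{R}^n$ is generated by the coordinate projections, the $\sigma$-algebra generated by the functionals $\{h_i\}$ (as $h$ ranges over $H$) coincides with the one generated by all the $h^{-1}(B)$, which is exactly $\mathcal{A}$. Thus this subfamily of $\mathrm{Lin}(E,\mu)$ already generates the whole measure-space structure. For separation: if $x \ne y$ in $E$, then by the separation hypothesis some $h$ satisfies $h(x) \ne h(y)$, hence some component has $h_i(x) \ne h_i(y)$; so the $h_i$ separate points of $E$ on the nose, a fortiori mod $0$.

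There is no genuine obstacle here: the corollary is immediate precisely because the separating, generating family of homomorphisms built into the definition already consists of (everywhere-defined) linear measurable functionals. The only point worth flagging is interpretive, namely fixing what ``sufficiently many'' should mean; under the reading above the corollary records that the passage from the defining homomorphisms to the full space $\mathrm{Lin}(E,\mu)$ loses nothing of the measure-theoretic structure, while---as the Bernoulli example illustrates, where $\mathrm{Lin} = l_{\mathbb{N}}^2$ strictly contains the continuous dual $l_{\mathbb{N}}^1$---it may gain a great deal.
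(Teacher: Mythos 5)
Your proposal is correct and follows essentially the same route as the paper: the paper likewise observes that the defining homomorphisms $E\to{\Bbb R}^n$, whose kernels have finite codimension, yield by passing to coordinates (equivalently, to hyperplane kernels) a separating, $\sigma$-algebra-generating family of everywhere-defined linear measurable functionals. Your explicit spelling out of the coordinate decomposition and of the meaning of ``sufficiently many'' is just a more detailed rendering of the paper's one-line remark.
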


Namely, every such functional corresponds to a measurable map $E \rightarrow {\Bbb R}$ whose kernel is a subspace of codimension~$1$ (a hyperplane). Obviously, the fact that there are sufficiently many maps corresponding to subspaces of finite codimension implies that there are sufficiently many maps corresponding to hyperplanes. Moreover, in the definition one can also require the existence of sufficiently many hyperplanes only. 

The space $\mbox{\rm{Lin}}(E,\mu)$ is the main tool in duality theory. It can be called the ``measurable dual'' space to a linear measure space. To find an explicit description of this space for a given vector measure space $(E,\mu)$ is just as difficult as to describe the space of continuous linear functionals on a topological vector space.

\subsection{One-dimensional homomorphisms of vector measure spaces}

Consider a more general notion closely related to the notion of a linear measurable functional.

 \begin{definition}
 A measurable functional on a vector measure space is called a one-dimensional measurable linear map $f:E \rightarrow  {\Bbb R}$ if
  $$ f(\lambda_1 x + \lambda_2 y)=\lambda_1 f(x)+\lambda_2 f(x)$$
for all pairs  $(\lambda_1,\lambda_2)$ of real numbers (equivalently, almost everywhere with respect to the Lebesgue measure on ${\Bbb R}^2$) and almost all pairs $(x,y)$ with respect to the measure $\mu \times \mu$.
 \end{definition}
 
 In some papers, such functionals were called almost linear. Obviously, a measurable linear functional is a one-dimensional linear map of the space $(E,\mu)$ to ${\Bbb R}$. The converse is also true.

  \begin{theorem}
To every one-dimensional measurable linear map \!\!\!$\mod 0$ of a vector measure space to ${\Bbb R}$ there corresponds a unique  \!\!\!$\mod 0$ linear measurable functional on this space generating this map.
    \end{theorem}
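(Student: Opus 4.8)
The plan is to separate the soft reductions, which are essentially routine, from the single genuinely hard step, and to be honest about where the difficulty lies. Throughout, fix a measurable representative of the given one-dimensional measurable linear map $f$ and write $c(x,y):=f(x+y)-f(x)-f(y)$. First I would extract from the definition the two relations $c(x,y)=0$ for $(\mu\times\mu)$-almost all $(x,y)$ and $f(\lambda x)=\lambda f(x)$ for almost all $x$ and all $\lambda$ (take $(\lambda_1,\lambda_2)=(1,1)$ and $(\lambda,0)$). Next, for $(\mu\times\mu)$-almost all fixed $(x,y)$ the function $(\lambda_1,\lambda_2)\mapsto f(\lambda_1 x+\lambda_2 y)$ is measurable on $\mathbb{R}^2$ and satisfies Cauchy's functional equation almost everywhere; by the classical theorem that a measurable almost-additive function on $\mathbb{R}^2$ agrees with a genuine linear one (this is exactly the equivalence already asserted in the definition between ``for all $(\lambda_1,\lambda_2)$'' and ``almost everywhere''), it follows that $f(\lambda_1 x+\lambda_2 y)=\lambda_1 f(x)+\lambda_2 f(y)$ for \emph{all} $(\lambda_1,\lambda_2)$. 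Thus for almost all pairs the chosen representative is genuinely linear on the whole two-plane $\langle x,y\rangle$, with the correct coefficients $f(x),f(y)$; call such a pair \emph{good}.

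I would then define $E_f$ to be the set of $x$ for which $(x,y)$ is good for $\mu$-almost all $y$; by Fubini $\mu(E_f)=1$, and I would take $f$ itself as the candidate functional. The scalar structure is already under control: for $x\in E_f$ and $c\neq 0$ one has $\langle cx,y\rangle=\langle x,y\rangle$, so $(cx,y)$ is good whenever $(x,y)$ is, whence $cx\in E_f$ and $f(cx)=cf(x)$. Hence $E_f$ is closed under scaling and $f$ is homogeneous on it, and the \emph{entire} remaining content reduces to additivity: for $x_1,x_2\in E_f$ one must prove $f(x_1+x_2)=f(x_1)+f(x_2)$ (closure $x_1+x_2\in E_f$ then follows by the same genericity argument applied to the planes $\langle x_1+x_2,y\rangle$). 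Uniqueness needs no work: a linear measurable functional is by definition a class mod $0$, so any two such classes generating the same map both coincide with it mod $0$, hence with each other.

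The additivity step is where I expect the real difficulty, and it is a genuine difficulty rather than a calculation. The naive device --- pick an auxiliary $z$, write $x_1+x_2=(x_1+z)+(x_2-z)$, and use that the pair $(x_1+z,x_2-z)$ is good to get $f(x_1+x_2)=f(x_1+z)+f(x_2-z)=f(x_1)+f(x_2)$ --- \textbf{fails}, because the law of $(x_1+z,x_2-z)$ is in general \emph{singular} with respect to $\mu\times\mu$; already for an infinite-dimensional Gaussian measure $\mu*\mu\perp\mu$ by the Feldman--H\'ajek dichotomy, so a property holding $(\mu\times\mu)$-almost everywhere need not hold on this one-parameter family at all. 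The same singularity defeats every attempt to propagate the hypothesis by translation or convolution, and reflects the fact that $\mu$ is \emph{not} assumed quasi-invariant. This is the crux: the $(\mu\times\mu)$-a.e.\ hypothesis must be upgraded to genuine linearity without moving the measure.

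To achieve this I would use the Lebesgue--Rokhlin hypothesis essentially. Realize $(E,\mathcal A,\mu)$ mod $0$ as $(\mathbb{R}^\infty,\rho)$ through the \emph{linear} embedding $\Pi=(\pi_k)_k$ built from a countable generating family of the defining homomorphisms, so that $f$ becomes a measurable function of the genuine linear coordinates and one has the filtration $\mathcal A_n=\sigma(\pi_n)$ with $\bigvee_n\mathcal A_n=\mathcal A$. I would then approximate $f$ \emph{not} by conditional expectations $E[f\mid\mathcal A_n]$ --- these need not be linear, precisely because the conditional measures on the fibres of $\pi_n$ need not be convolution-consistent --- but by the orthogonal projections $f_n$ of $f$ onto the finite-dimensional spaces $H_n=\mathrm{span}\{(\pi_n)_1,\dots,(\pi_n)_n\}$ of genuine linear functionals (working in $L^2$ when $f\in L^2$, and otherwise in the convergence-in-measure topology that already defines $\mathrm{Lin}$). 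Each $f_n$ is genuinely linear, and the heart of the matter is to show $f_n\to f$ (in $L^2$, resp.\ in measure), equivalently that almost-linearity leaves $f$ no component orthogonal to the closure of $\bigcup_n H_n$; this is where the strong two-plane linearity above, together with the product-type structure furnished by the Lebesgue--Rokhlin isomorphism, must be used in full. Granting this convergence, the set on which the sequence $f_n$ converges is a genuine linear subspace of full measure on which the limit is linear, and it furnishes the required $E_f$ and representative, completing the existence part.
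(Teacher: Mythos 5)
Your reductions (homogeneity, the identification of additivity on a full-measure subspace as the crux, the triviality of uniqueness) are sound, and your observation that the naive translation argument fails is substantive and correct: the law of $(x_1+z,x_2-z)$ is indeed typically singular with respect to $\mu\times\mu$, so a $(\mu\times\mu)$-a.e.\ hypothesis says nothing along such a one-parameter family, and no quasi-invariance is assumed. But the proof is not complete: the entire difficulty you isolate is then deferred to the claim that the projections $f_n$ onto $H_n=\mathrm{span}\{(\pi_n)_1,\dots,(\pi_n)_n\}$ converge to $f$, and you explicitly write ``granting this convergence.'' That claim is essentially a restatement of the conclusion --- by the paper's own Remark after Theorem~2, the measurable linear functionals on the coordinate realization are precisely the limits in measure of finite linear combinations of coordinates --- so you have reduced the theorem to itself rather than proved it. Worse, the target of your convergence is not obviously the right one: the paper stresses that the defining homomorphisms ``are far from exhausting'' $\mathrm{Lin}(E,\mu)$, and you do not show that the closure of $\bigcup_n H_n$ in measure contains every one-dimensional measurable linear map (or even every measurable linear functional); the footnote to the Bernoulli example warns that even in that case identifying $\mathrm{Lin}$ with the closed span of the coordinates ``does not follow from the three-series theorem'' and needs the structure of the full space of measurable functions on a product. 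So the one genuinely hard step is named but not carried out.

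For comparison, the paper itself offers no argument at this point: it declares the statement ``quite obvious'' and says that Fubini's theorem allows one to make corrections ensuring literal linearity. Your writeup is therefore more honest than the source about where the difficulty sits, and the observation that a bare Fubini/translation argument runs into the singularity of $\mu*\mu$ with respect to $\mu$ is a legitimate criticism of that dismissal; a complete proof should exploit the full two-dimensional scalar family in the definition (which gives, for almost every pair $(x,y)$, an honest linear function on the plane $\langle x,y\rangle$ agreeing with $f$ Lebesgue-a.e.\ there) together with the realization of $(E,\mu)$ on ${\Bbb R}^{\infty}$, and then establish the consistency of these plane-by-plane corrections --- which is exactly the step still missing from your proposal.
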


Checking this fact is quite obvious. The difference in the understanding of linearity --- in the literal sense or for almost all pairs of values of the arguments --- can easily be overcome using Fubini's theorem, which allows one to make corrections ensuring literal linearity.

  %%%%%
  %\textbf{Текст в предыдущем абзаце изменен!}

\section{Duality theory and classification}

In this section, we consider the basic facts of the duality theory of vector spaces endowed with probability measures.

\subsection{Characteristic functional}
The following classical definition underlies the whole measure theory. 

\begin{definition}
Consider the following functional, called the characteristic functional,
on the space $L_0(X,\mu)$ of all real measurable functionals of an arbitrary Lebesgue space (or on a subspace of $L_0(X,\mu)$):
$$ \xi(f)=\int_{x\in E} \exp\left(if(x)\right) d\mu(x), \quad f\in \mbox{\rm{Lin}}(E,\mu).
$$
\end{definition}

  {\it Obviously, $\xi(\cdot)$ is a positive definite functional on $L_0$ in the sense of the structure of the linear space, in particular, the Hilbert space
 $L^2$}. It completely determines the measure: the values of the measure on all measurable subsets generating the $\sigma$-algebra are uniquely defined. The restriction of the functional to $L^2$ is continuous in the topology of $L^2$.

The characteristic functional is the most convenient way (though rarely used beyond the probabilistic literature), among many others, to define an additional structure in the Banach spaces $L_0(X,\mu)$ or $L^p (X,\mu)$, $p\geq 0$, that characterizes them as spaces of measurable functions. For example, a short definition of the class of unitary operators in  $L^2(X,\mu)$ corresponding to transformations with invariant measure (i.e., multiplicative, real, preserving the partial order, etc.)\ is that it coincides with the class of unitary operators preserving the characteristic functional.

 %%%%%
 % \textbf{Текст в предыдущем абзаце изменен!}

\subsection{Realization and classification of vector measure spaces}

Using the space $\mbox{\rm{Lin}}(E,\mu)$, we will give a universal realization of a vector measure space.

\begin{theorem}
Every vector measure space $(E,\mu)$ is linearly isomorphic to the space $({\Bbb R}^{\infty}, \nu)$ where $\nu$ is a Borel probability measure.
\end{theorem}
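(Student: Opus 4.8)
The plan is to realize $(E,\mu)$ by its coordinates with respect to a countable generating family of \emph{linear} measurable functionals, and to identify the pushforward of $\mu$ under the resulting coordinate map as the desired measure $\nu$ on ${\Bbb R}^{\infty}$. The two clauses in the definition of a vector measure space would be used in tandem: the Lebesgue--Rokhlin property supplies separability (hence countability), while the requirement that $\mathcal A$ be generated by homomorphisms to finite-dimensional spaces supplies generators that are themselves l.m.f.'s.

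First I would extract a countable generating family of l.m.f.'s. Each defining homomorphism $E \to {\Bbb R}^{n}$ decomposes into its $n$ coordinate functionals, each of which lies in $\mbox{\rm{Lin}}(E,\mu)$; call the resulting (possibly uncountable) family $\mathcal G$, so that $\sigma(\mathcal G)=\mathcal A$. Because $(E,\mathcal A,\mu)$ is a Lebesgue--Rokhlin space, its measure algebra is separable, so there is a countable family $\{B_k\}\subset\mathcal A$ dense in the measure algebra, whence $\sigma(\{B_k\})=\mathcal A \pmod 0$. Since every set of $\sigma(\mathcal G)$ already lies in the $\sigma$-algebra generated by some countable subfamily of $\mathcal G$, I would choose for each $k$ a countable $\mathcal G_k\subset\mathcal G$ with $B_k\in\sigma(\mathcal G_k)$ and set $\{f_n\}:=\bigcup_k \mathcal G_k$; this is a countable family of linear measurable functionals with $\sigma(\{f_n\})=\mathcal A \pmod 0$. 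By the remark following the definition of an l.m.f., all the $f_n$ are simultaneously (literally) linear on one common vector subspace $E_0\subset E$ of full measure.

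Next I would form the coordinate map $\Phi:E_0\to{\Bbb R}^{\infty}$, $\Phi(x)=(f_1(x),f_2(x),\dots)$, which is linear on $E_0$ and measurable, and put $\nu:=\Phi_*\mu$, a Borel probability measure on ${\Bbb R}^{\infty}$ (whose Borel and product $\sigma$-algebras coincide). Writing $\pi_n$ for the $n$-th projection, one has $\pi_n\circ\Phi=f_n$, so $\Phi^{-1}(\mathcal B)=\sigma(\{f_n\})=\mathcal A \pmod 0$, where $\mathcal B$ is the Borel $\sigma$-algebra of ${\Bbb R}^{\infty}$. A measure-preserving map between Lebesgue--Rokhlin spaces whose pullback $\sigma$-algebra is the full $\sigma$-algebra mod $0$ is an isomorphism of measure spaces mod $0$ (Rokhlin); this yields the measure-theoretic part of the claim, together with full-measure sets $E_1\subset E_0$ and $V_1\subset{\Bbb R}^{\infty}$ on which $\Phi$ is a measurable bijection.

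Finally I would upgrade this to a \emph{linear} isomorphism. The map $\Phi$ is already linear, so it only remains to check that its inverse is linear mod $0$. For $\nu\times\nu$-almost every pair $(v,w)$ with $v,w,v+w\in V_1$, linearity of $\Phi$ forces $\Phi^{-1}(v+w)$ and $\Phi^{-1}(v)+\Phi^{-1}(w)$ to have the same image under the injective map $\Phi$, hence to coincide; thus $\Phi^{-1}$ is a one-dimensional measurable linear map, and by the Fubini correction already invoked for Theorem~1 it agrees mod $0$ with a genuine linear map on a full-measure subspace of $({\Bbb R}^{\infty},\nu)$. Therefore $\Phi$ is an isomorphism of vector measure spaces. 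The step I expect to be the main obstacle is precisely this coordination of the two structures: the Rokhlin machinery delivers only a measure isomorphism, and the real work lies in keeping the generators — and hence $\Phi$ and its inverse — linear mod $0$, which is exactly what the selection of linear countable generators in the first step and the almost-linearity argument here are designed to secure.
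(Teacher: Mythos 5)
Your proposal is correct and follows essentially the same route as the paper: choose a countable generating family of linear measurable functionals, use it as a coordinate map into ${\Bbb R}^{\infty}$, push the measure forward, and let the Lebesgue--Rokhlin machinery supply the mod~$0$ measure isomorphism while linearity is preserved because the coordinates are themselves linear on a common full-measure subspace. Your extra care in extracting the countable generators from the defining homomorphisms and in verifying linearity of the inverse only fills in details the paper leaves implicit.
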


   \begin{proof}
Recall that $(E,\mu)$, regarded as a measure space, is, by definition, a Lebesgue space. Consider the space of all linear measurable functionals $\mbox{\rm{Lin}}(E,\mu)$ and choose in it a dense countable generating (i.e., separating the points) sequence of elements. Take a vector space $E_0$ of full measure on which these elements (as linear functionals) are defined and separate the points.

This sequence determines (as a sequence of random variables) an isomorphism between the spaces $(E,\mu)$ and ${\Bbb R}^{\infty}$.
Denote the image of $\mu$ under this isomorphism by $\nu$. The space
  $({\Bbb R}^{\infty},\nu) $ is, obviously, a vector measure space. But the constructed isomorphism is linear on the chosen set, since the collection of linear functionals is total and linear functionals are mapped to linear functionals.
 \end{proof}

\textbf{Remark.} The chosen method of constructing an isomorphism also proves that all linear measurable functionals for measures on 
${\Bbb R}^{\infty}$ correspond to series converging in measure. The difference between various ways of choosing the countable set from the proof of the theorem, obviously, leads to the above-mentioned difference between isomorphisms up to coincidence \!\!\!$\mod 0$. Of course, instead of
${\Bbb R}^{\infty}$ one can take other topological spaces, e.g., a Hilbert space (see below).

Let us turn to the question of what subspaces of the space of all measurable functions can be spaces of measurable linear functionals.

\begin{theorem}
Every generating subset of the space $L_0(X,\mu)$ of all measurable functions on a Lebesgue space $(X,\mu)$ closed in the topology of convergence in measure is the space of all measurable linear functionals on some vector measure space unique up to isomorphism.
\end{theorem}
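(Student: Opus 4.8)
The plan is to invert the realization of Theorem~3: given the closed generating subspace $H\subseteq L_0(X,\mu)$ (I read ``generating subset'' as a closed linear subspace that generates $\mathcal A$ mod $0$, equivalently separates points mod $0$), I would transport a linear structure from $\mathbb R^{\infty}$ back onto $X$ so that $H$ becomes exactly the space of linear measurable functionals, and then check that this structure is forced.

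First I would fix the data. Since $(X,\mu)$ is a Lebesgue space, $L_0(X,\mu)$ is separable in measure, so I choose a countable dense sequence $\{f_n\}\subset H$. Because convergence in measure keeps the limit measurable with respect to the $\sigma$-algebra generated by the sequence, each $f\in H=\overline{\operatorname{span}\{f_n\}}$ is $\sigma(\{f_n\})$-measurable mod $0$; hence $\{f_n\}$ still generates $\mathcal A$ mod $0$ and separates points mod $0$. The map $\Phi=(f_1,f_2,\dots)\colon X\to\mathbb R^{\infty}$ is then an isomorphism of measure spaces onto $(\mathbb R^{\infty},\nu)$, $\nu=\Phi_{*}\mu$, exactly as in Theorem~3. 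Pulling back the coordinatewise linear structure of $\mathbb R^{\infty}$ through $\Phi$ equips $X$ (mod $0$) with a vector structure; call the resulting vector measure space $E$. By construction the coordinate functionals $\pi_n$ correspond to $f_n$, so each $f_n$ is linear on $E$, and since $\mathrm{Lin}(E,\mu)$ is a closed linear subspace of $L_0$ containing the dense set $\{f_n\}$, I immediately obtain $H=\overline{\operatorname{span}\{f_n\}}\subseteq\mathrm{Lin}(E,\mu)$.

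The hard part will be the reverse inclusion $\mathrm{Lin}(E,\mu)\subseteq H$, i.e.\ that the chosen coordinates already exhaust \emph{all} l.m.f.'s with none hidden. Transported through $\Phi$, this is precisely the assertion that $\mathrm{Lin}(\mathbb R^{\infty},\nu)=\overline{\operatorname{span}\{\pi_n\}}$ for the arbitrary Borel measure $\nu$, i.e.\ that every measurable linear functional on $\mathbb R^{\infty}$ is a coordinate series converging in measure. This is exactly the structural fact recorded in the Remark after Theorem~3 and illustrated by the footnote to the Bernoulli example, and it is the real content of the statement: it rests on approximating a measurable function on a countable product by functions of finitely many coordinates and then using linearity to upgrade the approximation, and it cannot be obtained from any soft duality argument because $L_0$ carries no useful locally convex structure for separation. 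I would invoke this fact; granting it, pulling back along $\Phi$ yields $\mathrm{Lin}(E,\mu)=H$, which settles existence.

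Finally, for uniqueness up to isomorphism I would argue that $H$ pins down the linear structure. Any vector measure space whose space of l.m.f.'s equals $H$ must have operations satisfying $f(x+y)=f(x)+f(y)$ and $f(\lambda x)=\lambda f(x)$ for every $f\in H$; since $H$ separates points mod $0$, the element $x+y$ (resp.\ $\lambda x$) is determined mod $0$ by the prescribed values $\{f(x)+f(y)\}_{f\in H}$ (resp.\ $\{\lambda f(x)\}_{f\in H}$). Hence addition and scaling are forced mod $0$, independently of the choice of dense sequence; transporting two such spaces onto the common Lebesgue space $(X,\mu)$ makes their structures coincide mod $0$, so the identity map is a linear measure-preserving isomorphism. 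Here Theorems~1 and~2, which let one correct an ``almost linear'' map to a genuinely linear one mod $0$, guarantee that these forced operations are themselves honest vector-space operations.
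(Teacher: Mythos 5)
Your architecture is sound as far as it goes: realize $(X,\mu)$ in ${\mathbb R}^{\infty}$ through a dense sequence in $H$, pull back the coordinatewise linear structure, prove the two inclusions, and derive uniqueness from the fact that a separating family of linear functionals forces the vector operations mod $0$ (that last paragraph is essentially the argument behind the paper's classification Proposition, and is fine). But the step you yourself label ``the hard part'' --- the inclusion $\mathrm{Lin}(E,\mu)\subseteq H$ --- is exactly where the content of the theorem sits, and invoking the Remark after the realization theorem does not supply it. That Remark is a by-product of the realization construction, in which the coordinate sequence is chosen dense in the \emph{full} space $\mathrm{Lin}(E,\mu)$; it therefore says nothing about your situation, where $\{f_n\}$ is dense only in $H$ and the whole question is whether the transported structure could admit measurable linear functionals outside $\overline{\mathrm{span}}\{\pi_n\}$. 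As written, your appeal to ``the structural fact'' for an arbitrary Borel measure $\nu$ on ${\mathbb R}^{\infty}$ is a restatement of the theorem itself (every case reduces to ${\mathbb R}^{\infty}$ by realization), so the argument is circular at its crux. A smaller soft spot: the ``immediate'' inclusion $H\subseteq\mathrm{Lin}(E,\mu)$ rests on $\mathrm{Lin}$ being closed under convergence in measure; since the distribution of $x+y$ under $\mu\times\mu$ need not be absolutely continuous with respect to $\mu$, passing linearity to a limit in measure is not automatic and deserves at least a sentence.

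The paper closes the main gap with a genuinely substantive mechanism rather than a soft one. It first replaces $\mu$ by an equivalent measure so that $H\cap L^2$ is generating (a change of density does not alter the set of measurable linear functionals), then applies Sazonov's theorem to the Hilbert space $H$ equipped with the characteristic functional $\chi$, obtaining a measure on a nuclear extension $\hat H$ of the dual $\bar H$ whose Fourier transform is $\chi$; this gives $H\subseteq\mathrm{Lin}(\hat H,\mu)$. The exhaustion step is an orthogonality argument: if $k$ were an extra square-integrable measurable linear functional, one may take $k\perp H$, run Sazonov again on $K=H\oplus{\mathbb R}k$, and note that the projection $\bar K\to\bar H$ is a measure isomorphism (because $H$ generates, $k$ is a Borel function of the elements of $H$) yet not a linear isomorphism, so $k$ cannot remain linear on the quotient --- a contradiction. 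Some argument of this strength (Minlos--Sazonov, or a bespoke analysis of conditional expectations for the particular measure $\Phi_{*}\mu$) is required where you currently have only a citation to the Remark.
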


\begin{proof}
Note that the case where $(X,\mu)$ is an atomic space, for example, with finitely many points, is trivial, though all arguments below are valid also for atomic measures. Hence all that follows concerns mainly the case where $(X,\mu)$ is a Lebesgue space with a continuous measure.

The following trick allows one to reduce the problem to Hilbert spaces. One can easily see that replacing the measure with an equivalent one, i.e., introducing a density on a vector measure space $(E,\mu)$, does not change the collection of linear measurable functionals. Hence, introducing a density allows one to consider only measures for which a dense countable set of linear measurable functionals is square-integrable, so that the intersection $\mbox{\rm{Lin}}(X,\mu)\bigcap L^2(X,\nu)$ becomes a generating subset. We have reduced the problem to the following one:

Prove that a given closed generating subspace $\mbox{\rm{Lin}}_0^2 \subset L^2(X,\nu)$ is the space of all square-integrable linear measurable functionals on some vector measure space.

This fact is an important refinement of Sazonov's theorem on measures in nuclear spaces (see the original papers by V.~V.~Sazonov and R.~A.~Minlos or, e.g.,  \cite{V0}).
\!\!\footnote{\,Minlos' theorem on extension of measures in nuclear spaces is less convenient for our purposes, but in principle it could also be used for the proof.}

Let us apply Sazonov's theorem to the real Hilbert space $H := \mbox{\rm{Lin}}_0^2 \subset L^2(X,\nu)$ on which a continuous positive definite functional $\chi$ is defined, namely, the characteristic functional (see the previous section). The theorem says that in a nuclear extension $\hat H$ of the space $\bar H \subset \hat H$ dual to $H$ there is a uniquely defined Borel probability measure $\mu$ whose Fourier transform coincides with $\chi$. Our refinement of this assertion is that $H$ is the space of all linear measurable functionals on $(\hat H, \mu)$, i.e.,
 $$\mbox{\rm{Lin}}(\hat H, \mu) = H \subset L^2(X,\nu)=L^2(\hat H, \mu).$$
 The inclusion of the right-hand side into the left-hand one is a part (usually omitted) of the assertion of Sazonov's theorem. Indeed, the fact that every element $h \in H$ is defined as a measurable linear functional on the nuclear extension follows from the fact that is lies (by definition) in the domain of definition of the Fourier transform of the measure $\mu$ --- this is exactly the assertion of the theorem --- i.e., determines a measurable functional on $(\hat H, \mu)$, and the linearity of this functional follows from its linearity (and continuity) on the space $\bar H$, which is dense in $\hat H$.

Let us show that there is no other linear measurable square-integrable functionals. Let
 $k\in L^2(\hat H, \mu)$ be such a functional. We may assume that it is orthogonal to $H$. Consider the direct sum
 $K\equiv H\bigoplus \{ck, c \in\Bbb R\}$, on which the same characteristic functional is defined, and apply to it the same theorem. We obtain a new vector measure space $(\hat K, \rho)$ (a nuclear extension of the space $\bar K$ dual to $K$). Clearly, it is a one-dimensional extension of the space $\bar H$. The projection $\bar K\rightarrow \bar H$ is a linear homomorphism of vector measure spaces and simultaneously a metric isomorphism, since the element $k$ is a function of elements of $H$, because $H$ is a generating space. On the other hand, this homomorphism is not a linear isomorphism, since, by our assumption, $k$ is a nonzero functional orthogonal to $H$. It follows that the linear functional $k$ on the space $\bar K$  is not a linear functional on the projection $\bar H$ (but, of course, it is some nonlinear functional)\footnote{A good and adequate geometric illustration of this argument is the above example with a Bernoulli measure on the product of two-point spaces: in this case,  $K=l^2$, $k=\{\frac{1}{2^n}\}$,
and $H$ is the orthogonal complement to $k$ in $K$. The projection sends the Bernoulli measure $\mu$ on the vertices of the unit cube isomorphically (i.e., uniquely)
 to the hyperplane $H$. However, with respect to the projection of $\mu$, the functional $k$ is  no longer linear. It is easy to express it in terms of Rademacher functions on the interval $[0,1]$, but this representation is not unique.}.

Thus we have proved that every closed subset in $L^2(X,\mu)$, where $(X,\mu)$ is a Lebesgue space with a continuous measure, is the space of all linear measurable functionals on a vector measure space.
 \end{proof}

  \subsection{Classification of vector measure spaces and spaces of measurable linear functionals}
 
 It follows from the previous theorem that defining a measure vector space satisfying the square-integrability condition for linear measurable functionals (in short,  a \emph{square-integrable measure vector space}) 
 is equivalent to defining a closed subspace in $L^2$: the linear isomorphism of such vector measure spaces is equivalent to the metric isomorphism of the spaces of measurable functionals on them.

 \begin{proposition}
 The classification of square-integrable measure vector spaces up to linear isomorphism \!\!\!$\mod 0$ is equivalent to the classification of the corresponding spaces of measurable linear functionals as subspaces in $L^2$ with respect to the group of unitary multiplicative operators (i.e., the group of measure-preserving transformations); or, equivalently (if we endow these spaces with the characteristic functionals defined above), to the classification of subspaces with respect to the group of unitary operators preserving the characteristic functional:
 $$(E_1,\mu_1)\sim (E_2,\mu_2) \quad\Leftrightarrow\quad (F_1,\xi)\simeq  (F_2,\xi),$$
where $F_1,F_2$ are the spaces of linear measurable functionals on $E_1,E_2$, respectively, and $\simeq$ stands for an isomorphism of these spaces as linear subspaces in $L^2$ of the form described above with respect to the group of unitary multiplicative operators. The corresponding isomorphism of vector measure spaces is exactly the isomorphism between  $(E_1,\mu_1)$ and $(E_2,\mu_2)$.
 \end{proposition}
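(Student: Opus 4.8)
The plan is to read this proposition as the functorial repackaging of the preceding realization and classification theorems. Those results set up a bijection $(E,\mu)\mapsto F=\mbox{\rm{Lin}}_0^2(E,\mu)$ between square-integrable vector measure spaces and closed generating subspaces of $L^2$, and all that remains is to verify that this bijection carries linear measure-preserving isomorphisms $\mod 0$ of the former exactly onto the isomorphisms $\simeq$ of the latter, in both directions. Throughout I will use the characterization recalled for the characteristic functional above, that the unitary multiplicative operators, the unitary operators preserving $\xi$, and the operators induced by measure-preserving point transformations of the underlying Lebesgue space form one and the same class; this makes the two formulations of $\simeq$ in the statement interchangeable, so it suffices to argue with any one of them.

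For the implication $\Rightarrow$ I would take a linear isomorphism $\mod 0$, $T\colon (E_1,\mu_1)\to(E_2,\mu_2)$, preserving the measure, and form the pullback $Uf=f\circ T$. Since $T$ is linear on a subspace of full measure, $f\circ T$ is again a linear measurable functional whenever $f$ is, so $U$ maps $F_2$ into $F_1$; applying the same to $T^{-1}$ gives $U(F_2)=F_1$. Because $T$ preserves $\mu$, the operator $U$ is precisely the unitary multiplicative operator attached to $T$, and the one-line computation $\int_{E_1}\exp\bigl(if(Tx)\bigr)\,d\mu_1=\int_{E_2}\exp\bigl(if(y)\bigr)\,d\mu_2$ shows that $U$ preserves the characteristic functional. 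Thus $U$ realizes the required isomorphism $F_1\simeq F_2$.

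The substance is the converse $\Leftarrow$. Suppose $U$ is a unitary multiplicative operator carrying $F_1$ onto $F_2$ and preserving $\xi$; by the cited characterization it is induced by a measure-preserving isomorphism $S$ of the underlying spaces, normalized so that $U^{-1}g=g\circ S$. The point to prove is that $S$ is \emph{linear} $\mod 0$ with respect to the two vector structures. I would establish this by passing to the universal realization on $\mathbb{R}^{\infty}$ constructed above: choose a countable dense generating sequence $\{g_n\}\subset F_2$, so that $\{U^{-1}g_n\}$ is a countable dense generating sequence in $F_1$, and use $\{U^{-1}g_n\}$ to realize $(E_1,\mu_1)$ on $\mathbb{R}^{\infty}$ and $\{g_n\}$ to realize $(E_2,\mu_2)$ on $\mathbb{R}^{\infty}$. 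Under these two coordinatizations a point $x$ has coordinates $\bigl((U^{-1}g_n)(x)\bigr)_n=\bigl(g_n(Sx)\bigr)_n$ in the first model, while $Sx$ has coordinates $\bigl(g_n(Sx)\bigr)_n$ in the second; hence $S$ becomes literally the identity map of $\mathbb{R}^{\infty}$, which is linear, and the two image measures coincide. Since the identity is a linear measure-preserving isomorphism, the realizations, and therefore $(E_1,\mu_1)$ and $(E_2,\mu_2)$, are linearly isomorphic $\mod 0$, as required.

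The main obstacle is exactly the linearity claim in the converse: one must know that the vector-space structure of a square-integrable vector measure space is completely encoded by its subspace of linear measurable functionals, so that a measure isomorphism respecting these subspaces has no choice but to be linear. The delicate part is the $\mod 0$ bookkeeping, since individual linear measurable functionals have individual full-measure domains and there is no common linear domain for all of them; this is why I would not attempt to verify linearity pointwise on the $E_i$, but instead transport everything to the common model $\mathbb{R}^{\infty}$, where the map in question is the identity and both linearity and measure-preservation are manifest. Beyond the preceding realization and classification theorems, the only external input is the identification of measure-preserving transformations with multiplicative (equivalently, characteristic-functional-preserving) unitary operators recalled above.
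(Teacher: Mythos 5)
Your proof is correct and follows essentially the route the paper intends: the proposition is stated there without a separate proof, as an immediate consequence of the preceding realization theorems (the $\mathbb{R}^{\infty}$ realization and the correspondence between closed generating subspaces of $L^2$ and vector measure spaces) together with the identification of multiplicative unitaries preserving the characteristic functional with measure-preserving point transformations of a Lebesgue space. Your converse argument --- transporting both spaces to $\mathbb{R}^{\infty}$ via the matched generating sequences $\{U^{-1}g_n\}$ and $\{g_n\}$ so that the point map $S$ becomes the identity --- is precisely the natural way to make the paper's implicit claim explicit, and it correctly isolates the only nontrivial point, namely that a measure isomorphism matching the subspaces of linear measurable functionals is automatically linear $\bmod\ 0$.
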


Thus the type (up to isomorphism) of the space of all linear measurable functionals is a complete invariant (more exactly, coinvariant) with respect to the group of isomorphisms of vector measure spaces.

So, the two classifications are equivalent:

$\bullet$ classification of vector measure spaces;

$\bullet$ classification of generating closed linear subspaces in the space of all measurable functions on a Lebesgue space.

Of course, it is a kind of analog of the well-known metric classification of measurable functions (random variables): a complete metric invariant of a one-to-one \!\!\!$\mod 0$ (and hence generating) measurable function $f$ is the $f$-image of the measure (i.e., the distribution of $f$). 
In this case, a complete invariant of a closed generating subspace is the type of the space of linear measurable functionals.

A useful consequence of our considerations: every nondegenerate ($\chi(f)=1\Leftrightarrow f=0$) positive definite continuous functional on an infinite-dimensional real Hilbert space is the restriction of the characteristic functional under an appropriate embedding of this Hilbert space into the space $L^2([0,1],m)$ (where $m$ is the Lebesgue measure).

\smallskip
\textbf{Conclusion.}
\emph{The key aspect of the duality theory we propose is the parallel study of measures in vector spaces on one hand and closed generating subspaces in the space of all measurable functions on the other hand. A linear isomorphism of vector measure spaces is equivalent to a metric isomorphism of subspaces of measurable linear functionals.} 
\smallskip

Should one regard both these classifications as being ``wild'' or ``tame''? The answer to this question is not obvious, and perhaps the question itself is ill-posed. It reduces to the following question: to what extent the space of complete invariants of the classification is reasonable?

The classification of subspaces (e.g., of the space $L^2$) with respect to the group of multiplicative unitary operators, or, which is the same, the classification of orthogonal projections, is tame in the case of finite-dimensional subspaces or in the case where subspaces are subalgebras (in the latter case, the classification reduces to Rokhlin's classification of $\sigma$-subalgebras or measurable partitions). The author tends to believe that the case of an arbitrary generating subspace is also tame. Note that one may consider only generating spaces, since if the $\sigma$-algebra generated by a subspace is proper, then one should consider the problem in this subalgebra.

%%%%%
%\textbf{Предыдущий абзац не понял. Вероятно, нужно переформулировать яснее.}

Nevertheless, it is most likely that one cannot provide a simple complete system of invariants of a subspace. But in the equivalent classification of measures in vector spaces up to linear isomorphism, there is also no visible canonical forms, unless we deal with special kinds of measures (products of independent measures, 
Markov measures, etc.). Even the example of finite-dimensional spaces shows that attempts to find a normal form of a measure are not hopeless, but unproductive. Still, both classification problems bear little resemblance to known ``nonsmooth'' problems, such as the classification of conjugacy classes of the group of measure-preserving automorphisms of the interval or the classification of dyadic filtrations \cite{V6}.

But one can study measures and their invariants corresponding to some remarkable subspaces in  $L^2$ and, equivalently, investigate the properties of some subspaces of measurable linear functionals for Gaussian measures, products of independent measures, 
etc. In the last section, we study a remarkable measure corresponding to the case where the space of linear functionals is the whole space of measurable functions, namely, develop the theory of free measures. A free measure is unique and universal.

\subsection{The kernel of a vector measure space and lifting}

In view of what we have said above on the domain of definition of a l.m.f., it is natural to pose the following question: can one define a single vector subspace of full measure on which all linear measurable functionals are defined? Of course, in a finite-dimensional measure space, the answer is positive, because every linear measurable functional is continuous. But we will see that in the essentially infinite-dimensional case, the answer is negative, which makes the duality theory of vector measure spaces meaningful.

\begin{definition}
The kernel $\mbox{\rm{Ker}}(E,\mu)$ of a vector measure space $(E,\mu)$ is the space of linear continuous (in the topology of convergence in measure) functionals on the space $\mbox{\rm{Lin}}(E,\mu)$ of linear measurable functionals. The kernel can be trivial, i.e., consist only of the zero functional; but it can also be complete, 
i.e., separate the points of the space $\mbox{\rm{Lin}}(E,\mu)$.
 \end{definition}

For example, it is easy to check that for Gaussian measures and for products of independent measures with finite variance, the kernel is complete. Clearly, the kernel $\mbox{\rm{Ker}}(E,\mu)$ is a subspace of $E$, since it is contained in each of the subspaces on which some  linear measurable functional is defined, i.e., lies in the intersection of all vector subspaces of full measure.

If the measure $\mu$ is such that every finite-dimensional subspace has zero measure, then the measure of the kernel is also zero: $\mu(\mbox{\rm{Ker}}(E,\mu))=0$.

For Gaussian measures, the kernel coincides with the intersection of all subspaces of full measure and has a structure of a Hilbert space nuclearly embedded into the original measure space; in fact, this follows from the Minlos--Sazonov theorem and the fact that the space $\mbox{\rm{Lin}}(E,\mu)$ is a closed subset in $L^2(E,\mu)$.

In this connection, note that the necessity to choose a set of full measure on which relations can be corrected to become identical
 (\!\emph{lifting}) appears in many problems of measure theory. For example, this is the case for a series of assertions on the continuity of measurable positive definite functions on groups and, in particular, on the continuity of measurable characters.

Even deeper are theorems on pointwise measurable actions of groups of unitary multiplicative operators, i.e., groups of classes of automorphisms coinciding \!\!\!$\mod 0$. For example, the group of all unitary multiplicative operators has no everywhere defined individual group action, this is proved in  \cite{GTW}, a partial result\,\footnote{\,%
 Namely, that there is no individual linear action of this group on a space with a free measure.} being obtained in an earlier paper  \cite{V5}.
At the same time, every locally compact group of unitary miltiplicative operators has an individual lifting; the proof is given in 
 \cite{Mc}. \marginpar{???}
 In \cite{V1,V5}, the author developed the idea of a free measure (see below) and gave another proof using structure theorems on locally compact groups and the Minlos--Sazonov theorem.

\subsection{On Gelfand triples}
In connection with the above argument, it makes sense to address the notion, having historical interest, of a Gelfand triple, used in  the 1950s--1970s.

Consider a triple of spaces
     $$ E \subset H (\sim H') \subset E'.$$
Here $E'$ is the space of Schwartz distributions on a compact finite-dimensional manifold. On  $E'$, a Borel probability measure is defined, e.g., a Gaussian measure $\mu$, and $E$ is the space of test functions, i.e., the space of all continuous linear functionals on the space of distributions $E'$. The space  $H$ (and $H'$) was originally defined as the Hilbert space with the norm  determined by the correlation operator corresponding to the Gaussian measure $\mu$.\footnote{\,This triple was called a Gelfand triple (trinity), and he interpreted it in the following way: $H$ (identified with $H'$) is God the Father, $E$ is God the Son, and  $E'$ is God the Holy Spirit. There are different ways to explain this expressive interpretation, but it is meant that the definition of the whole scheme begins with the correlation operators of the Gaussian measure, then we have a reliable definition of the space of test functions, and the space of distributions endowed with a measure --- actually, the main, but poorly manageable, object --- can be studied with the help of the first two spaces.} Observe, by the way, that $E$ is a countably normed nuclear space, hence, by Minlos' theorem, every weak distribution in $E'$ can be extended to a countably additive measure.

It turned out that in this scheme, the space $H$ (and the space $H'$ isomorphic to it), which appeared, rather formally, as a Hilbert space {\it with a weak distribution that is not a countably additive measure}, has a direct interpretation: this is exactly the 
 {\it space of all measurable linear functionals on the vector measure space $(E,\mu)$}. Some of them are continuous linear functionals (elements of $E'$) and thus are defined everywhere. But there are also others, which are defined only almost everywhere, each on its own vector subspace of full measure in $(E,\mu)$, but not on the whole space $E$. In this example, all linear measurable functionals are square-integrable, $H\subset L^2(E,\mu)$, and the topology of convergence in measure coincides with the Hilbert topology (of $L^2$). The embedding $E'\subset H'$ of the space of continuous linear functionals into the space of measurable linear functionals is dense and (by Sazonov's theorem) is a nuclear embedding (i.e., is realized by a nuclear operator; sometimes, one speaks about a Hilbert--Schmidt embedding, meaning the square of the original nuclear operator).

It is useful not to identify the space $H'$, the Hilbert space dual to $H$, with $H$; its independent interpretation is described above: this is the so-called kernel of the Gaussian measure, i.e., the space $H'$ is the invariantly defined intersection of all vector subspaces of full measure $\mu$ in $E'$, which itself has zero measure. The space $H$ is exactly the space of all measurable linear functionals on $(E,\mu)$ in the case of Gaussian measures.

\section{A free measure space}

\subsection{Definition of a free measure}

\begin{definition}
A measure in a vector space is called free if every measurable functional is linear up to a correction on a set of zero measure depending, in general, on the functional.
\end{definition}

This important notion was implicitly used in  \cite{Mc}; 
 its exact description and the term 
 \emph{free measure} appeared in the author's papers \cite{V1,V5}.  The main purpose of this notion is rather methodological and categorical, but it turned out to be useful also in a number of practical questions.

From the viewpoint  of the classification theorem from the previous section, by which to every closed generating subspace there corresponds a measure, a free measure corresponds to the whole space of measurable functions.

However, it is useful to give a more direct construction than the one following from the previous, in a sense implicit, constructions.

Obviously, a vector space with an atomic measure supported exactly by the unit coordinate vectors of this space has this property: every measurable functional coincides almost everywhere with the linear functional determined by the values on the unit coordinate vectors. This is an example of an atomic vector space with a free measure. For atomic measures, all subsequent assertions are obvious, since a geometric realization of a free atomic measure is unique.

\subsection{Direct constructions of spaces with a continuous free measure}

Consider the vector space ${\Bbb R}^{\Bbb Z}$ of all two-sided infinite sequences of real numbers endowed with the natural $\sigma$-algebra $\mathcal B$ of Borel sets, and map the circle (actually, the interval) into this space:
$$ I:{\Bbb T} \rightarrow {\Bbb R}^{\Bbb Z}; \quad I(\lambda) =\{\exp{2\pi\cdot i n \lambda}\}_{n \in \Bbb Z}.$$

Let $\omega=I_*(m)$ be the image of the Lebesgue measure $m$ on $[0,1)$. Clearly, 
 $(R^{\Bbb Z},{\mathcal B},\omega)$ is a linear measure space in the sense of our definition. The existence of sufficiently many linear measurable (and even continuous in the weak topology of the space ${\Bbb R}^{\Bbb Z}$) functionals is obvious, but a more interesting fact holds, which follows from a difficult classical theorem on trigonometric series.

\begin{proposition}
Every measurable (almost everywhere finite) function on the measure space $({\Bbb R}^{\Bbb Z},{\mathcal B},\omega)$ lying in  $L^2$ is \!\!\!$\mod 0$ linear, i.e., coincides with a linear functional on a set of full measure.
\end{proposition}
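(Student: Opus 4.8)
The plan is to transport the problem to the circle and then recognize the asserted linearity as exactly the statement that an $L^2$ function equals its own Fourier series, the requisite convergence being supplied by the deep theorem on trigonometric series alluded to just before the statement, namely Carleson's theorem on almost-everywhere convergence of $L^2$ Fourier series.

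First I would observe that $I$ is injective $\mod 0$ --- the single coordinate $x_1(I(\lambda)) = \exp(2\pi i \lambda)$ already recovers $\lambda$ --- so that pushing $m$ forward identifies $(\mathbb{R}^{\mathbb{Z}}, \mathcal{B}, \omega)$ with $(\mathbb{T}, m)$ as measure spaces and $L^2(\mathbb{R}^{\mathbb{Z}}, \omega)$ with $L^2(\mathbb{T}, m)$. Under this identification the coordinate functionals $x_n$ become the characters $\lambda \mapsto \exp(2\pi i n \lambda)$ (or, splitting into real and imaginary parts, the real trigonometric system, which is what $\mathbb{R}^{\mathbb{Z}}$ actually indexes). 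Thus a genuine finite linear functional on $\mathbb{R}^{\mathbb{Z}}$ is a trigonometric polynomial, and a linear measurable functional is a trigonometric series $\sum_n c_n x_n$ convergent in measure.

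Next, take any $g \in L^2(\mathbb{R}^{\mathbb{Z}}, \omega) = L^2(\mathbb{T}, m)$ and let $c_n = \hat g(n)$ be its Fourier coefficients, so $\sum_n |c_n|^2 < \infty$. The symmetric partial sums $S_N = \sum_{|n| \le N} c_n x_n$ are honest linear functionals defined everywhere on $\mathbb{R}^{\mathbb{Z}}$. The key step is to invoke Carleson's theorem: since $g \in L^2$, its Fourier series converges to $g$ for $m$-almost every $\lambda$, that is, $S_N(I(\lambda)) \to g(I(\lambda))$ for a.e.\ $\lambda$. I would then set $V = \{x \in \mathbb{R}^{\mathbb{Z}} : \lim_N S_N(x) \text{ exists}\}$; this is a vector subspace (the set on which a fixed series of linear functionals converges is closed under sums and scalar multiples), and by Carleson it contains $I(\lambda)$ for a.e.\ $\lambda$, whence $\omega(V) = 1$. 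On $V$ the pointwise limit $f(x) := \lim_N S_N(x)$ is a genuine linear functional, being an everywhere-defined (on $V$) limit of the linear functionals $S_N$, and $f = g$ holds $\omega$-a.e.\ by the convergence statement. Hence $g$ is $\mod 0$ linear, and since $g$ was arbitrary the proposition follows.

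The only nontrivial ingredient is the almost-everywhere convergence of the $L^2$ Fourier series; everything else is the formal verification that a convergent series of coordinate functionals is a linear measurable functional in the sense of the definition. I expect this to be the main obstacle precisely because it is a genuinely hard theorem, and the $L^2$ hypothesis cannot be relaxed: Kolmogorov's example of an $L^1$ function whose Fourier series diverges almost everywhere shows that the analogous assertion for arbitrary measurable functionals is false, which is exactly why the proposition is stated for $f \in L^2$.
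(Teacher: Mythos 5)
Your argument for the proposition as stated is correct, and its skeleton (transport to the circle, represent the function by an almost everywhere convergent trigonometric series, observe that the set of convergence of a sequence of coordinate functionals is a linear subspace of full $\omega$-measure) is the same as the paper's. The difference is the analytic input: you invoke Carleson's theorem, whereas the paper invokes Menshov's representation theorem, which says that \emph{every} measurable, almost everywhere finite function on the circle is the sum of \emph{some} almost everywhere convergent trigonometric series, whose coefficients need not be the Fourier coefficients. For the $L^2$ case your route works, and in fact neither deep theorem is needed there: the partial Fourier sums converge in $L^2$, hence almost everywhere along a subsequence, and the subsequence $S_{N_k}$ already furnishes the full-measure linear subspace $V$ by exactly the argument you give. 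So Carleson is not ``the main obstacle''; it is avoidable.

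The genuine problem is your closing remark. Kolmogorov's example of an $L^1$ function with almost everywhere divergent Fourier series does \emph{not} show that the $L^2$ hypothesis is essential; it only shows that the Fourier series is the wrong witness to linearity for such a function. By Menshov's theorem a different trigonometric series, with coefficients unrelated to the Fourier coefficients, does converge almost everywhere to the function, and the same transfer argument then exhibits the function as a measurable linear functional. This stronger statement --- that \emph{every} measurable almost everywhere finite function on $({\Bbb R}^{\Bbb Z},\omega)$ is $\bmod\,0$ linear --- is exactly what the paper needs immediately afterwards to conclude that $\omega$ is a \emph{free} measure, so the restriction to $L^2$ in the proposition is incidental rather than essential, and your diagnosis of why it is there inverts the intended logic of the section.
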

%%%%%%%
%\textbf{?????? К чему в формулировке Предложения 2 относится \!\!\!$\mod 0$?}

\begin{proof} The well-known Menshov theorem (see \cite[Chap.~XV, \S\,2]{Ba}) says that every measurable function on the interval
 $(0,2\pi)$ is the sum of a (nonunique, in general) trigonometric series converging almost everywhere, which does not always coincide with the Fourier transform of its sum.
 
By construction, the measure space  $({\Bbb R}^{\Bbb Z},\omega)$ is isomorphic to the space $([0,1),m)$, hence every function $f$ from $L^2({\Bbb R}^{\Bbb Z},\omega)$ is the image of a function from $L^2([0,1),m)$ under the map conjugate to the map
 $I$:  $I^*(f)(\cdot)=f(I^{-1}(\cdot))$. But the function $f$ goes to the sequence of its Fourier coefficients:
$$\{\hat f_n\}_{n \in \Bbb Z}=\bigg\{\int_0^1 e^{2\pi i n \lambda}\cdot f(\lambda)d\lambda\bigg\}_{n \in \Bbb Z}.$$
On the other hand, the sequence $\{{\hat f}_n\}$ determines a linear measurable function on the linear measure space $({\Bbb R}^{\Bbb Z},\omega)$ by the formula
$$\langle\hat f,x\rangle:= \sum _{n \in \Bbb Z} {\hat f}_n x_n$$
for $\omega$-almost all (but not all) $x \in {\Bbb R}^{\Bbb Z}$.
\end{proof}

Thus we have constructed a space $({\Bbb R}^{\Bbb Z},\omega)$ with a continuous free measure.

For other constructions of a vector space with a continuous free measure, it suffices to find a countable system of measurable functions
$\{f_n\}$, $n \in N$, on $[0,1]$ (or on another Lebesgue space with a continuous measure) such that every measurable function on this space can be written as a series in this system converging almost everywhere on $[0,1]$.

Yet another construction is as follows. Consider a nuclear extension $\hat H$ of the space $L^2([0,1],m)$; it contains generalized eigenfunctions of the operator of multiplication by every bounded measurable function, i.e., it contains the $\delta$-functions
 $\delta_x$, $x\in [0,1]$. Transfer the measure $m$ over to the set of generalized eigenfunctions by the tautological map  $x\mapsto \delta_x$, and we obtain a free measure on the space~$\hat H$.

All these examples are isomorphic, as follows from the theorem on the isomorphism of vector measure spaces having the same space of measurable linear functionals.

Other models of a free measure in a vector space, adapted to different probabilistic or operator problems, are also of interest.

\subsection{Properties of vector spaces with a free measure}

We list, with brief explanations, a number of useful properties of free measures.

\begin{theorem}
{\rm1.} Every vector measure space is a linear isomorphic \!\!\!$\mod 0$ image of a vector space with a free continuous measure.
Thus a free measure plays the role of a universal object in the category of vector measure spaces.

{\rm 2.} Every measure-preserving automorphism of a space with a free measure is a \!\!\!$\mod 0$ linear automorphism of the space. Thus the group of all automorphism classes of a Lebesgue space with a continuous measure is a subgroup of the group of linear automorphisms of a space with a free measure.

{\rm3.} Every measurable partition of a space with a free measure coincides \!\!\!$\mod 0$ with the partition into the cosets of some vector subspace. Thus the lattice of all classes of measurable partitions of a Lebesgue spaces with a continuous measure is a sublattice of the lattice of affine partitions of the vector space.
\end{theorem}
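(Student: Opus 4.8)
The plan is to derive all three assertions from the single defining feature of a free measure, namely that its space of measurable linear functionals exhausts everything: as was observed after the definition, for a free space $(F,\lambda)$ one has $\mathrm{Lin}(F,\lambda)=L_0(F,\lambda)$. I would treat the continuous case throughout; the atomic case is immediate from the explicit coordinate model already discussed, so I assume $\lambda$ continuous and identify $(F,\lambda)$ as a measure space with $([0,1],m)$.

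For part 1 (universality), I would start from an arbitrary vector measure space $(E,\mu)$ and its measurable dual $\mathrm{Lin}(E,\mu)$, which is a closed generating subspace of $L_0(E,\mu)$. In the continuous case $(E,\mu)$ is isomorphic as a measure space to $([0,1],m)$, so I transport $\mathrm{Lin}(E,\mu)$ along this isomorphism to a closed generating subspace $H\subset L_0([0,1],m)$, carrying along the characteristic functional. Building the free space on the same underlying Lebesgue space gives $\mathrm{Lin}(F,\lambda)=L_0([0,1],m)\supseteq H$. This inclusion of measurable duals respects the $L^2$ (equivalently, convergence-in-measure) structure and the characteristic functional, so by the classification correspondence of the previous section it dualizes to a measure-preserving linear homomorphism $T\colon F\to E$; since $H$ is generating, $T$ is surjective mod 0, exhibiting $(E,\mu)$ as a linear image of the free space. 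Hence $F$ is a universal object in the category.

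For part 2, let $S$ be a measure-preserving automorphism of $(F,\lambda)$. For every $g\in\mathrm{Lin}(F,\lambda)$ the composite $g\circ S$ is a measurable function, hence --- by freeness --- again lies in $\mathrm{Lin}(F,\lambda)$, i.e.\ is itself a linear functional mod 0. Thus $S$ pulls back linear functionals to linear functionals. Choosing a countable generating separating sequence $g_1,g_2,\dots\in\mathrm{Lin}(F,\lambda)$, each $g_i\circ S$ is linear on a subspace of full measure; intersecting these countably many subspaces yields a single subspace of full measure on which all $g_i\circ S$ are simultaneously linear. In the coordinates $x\mapsto(g_i(x))_i$ realizing $F$ as $(\Bbb R^\infty,\nu)$, the map $S$ then acts by $y\mapsto(\sum_i a_{ji}y_i)_j$ with series converging in measure, exactly as in the proof of the realization theorem; therefore $S$ is a linear automorphism mod 0. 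Since $(F,\lambda)$ is measure-theoretically $([0,1],m)$, every automorphism class of $([0,1],m)$ arises this way, giving the claimed embedding of the group of automorphism classes into the group of linear automorphisms of $F$. For part 3 I would pass from a measurable partition $\zeta$ of $(F,\lambda)$ to its associated sub-$\sigma$-algebra $\mathcal A_\zeta$, generated by countably many measurable functions $f_1,f_2,\dots$ constant on the blocks of $\zeta$. By freeness each $f_i$ is linear mod 0; passing to the full-measure subspace where all $f_i$ are simultaneously linear, set $V=\bigcap_i\ker f_i$, a vector subspace. Two points lie in the same block iff all $f_i$ agree on them, i.e.\ iff their difference lies in $V$, so $\zeta$ coincides mod 0 with the partition into cosets of $V$. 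Refinement of partitions corresponds to inclusion of the generated function spaces, hence to reverse inclusion of the subspaces $V$, with joins and meets of partitions going to sums and intersections of subspaces; this makes $\zeta\mapsto V$ a lattice embedding.

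The routine but essential technical point throughout --- and the only place real care is needed --- is the uniformization step: replacing the individual, functional-dependent full-measure domains of linearity by a single full-measure subspace. For countable families this is the standard countable-intersection-plus-Fubini correction already used to pass from one-dimensional linear maps to genuine linear functionals, so it presents no obstacle. The subtler point is part 1, where one must invoke the classification correspondence in the correct direction to guarantee that the inclusion $H\subset\mathrm{Lin}(F,\lambda)$ produces a genuinely \emph{surjective} linear homomorphism onto $(E,\mu)$ rather than merely a homomorphism onto a sub-object; this is ensured precisely by $H$ being generating.
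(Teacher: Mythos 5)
Your proposal is correct and follows essentially the same route as the paper: part~1 via the correspondence between inclusions of spaces of measurable linear functionals and quotient maps of vector measure spaces (with the free space having $\mathrm{Lin}=L_0$), part~2 via freeness forcing $g\circ S$ to be linear and the coordinate realization, and part~3 via the subalgebra of functions constant on the blocks, whose common kernel gives the vector subspace. The paper's own proof is far terser (part~2 is simply deferred to the references), so your write-up mainly supplies the uniformization and dualization details that the paper leaves implicit.
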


\begin{proof}
1. This follows from the general fact that a space of linear measurable functionals $\mbox{\rm{Lin}}(E_1,\mu_1)$ is contained 
in another such space $(E_2,\mu_2)$ if and only if the space $(E_1,\mu_1)$ is isomorphic to a quotient space of  $(E_2,\mu_2)$. One can easily check this using the model of a vector measure space described above. The measurable partition with respect to which the quotient is taken is exactly the partition generated by the image of the smaller subspace.

2. Follows from the definition of a free measure. For details, see \cite{V1,V5}.

3. The subalgegra in  $L^{\infty}$ of functions constant on the elements of the partition determines an affine measurable partition of the space with a free measure. The fact that it is isomorphic to the original partition follows from the coincidence of the subalgebras of functions constant on the elements of the partitions.
\end{proof}

The kernel of a space with a free measure, in the sense of the definition from the previous section, is the zero subspace, since it is this subspace that is the common domain of definition of all linear measurable functionals.

The notion of a free measure allows one to establish a correspondence between geometric constructions in measure spaces and their functional analytic analogs in spaces of measurable functions. This is the main purpose of this notion.

\section{Applications: quasi-invariant measures and the spectral theorem}

\subsection{Quasi-invariant measures in vector spaces}

\begin{definition}
A measure in a vector space is called quasi-invariant if its kernel is complete (i.e., separates the measurable linear functionals) and the translation by an arbitrary element of the kernel sends the measure to an absolutely continuous measure.
\end{definition}

A Gaussian measure is quasi-invariant in this sense. This observation goes back to the well-known Cameron--Martin theorem of the 1940s on the Wiener measure, and was repeatedly rediscovered for different reasons (J.~Feldman's theorem on equivalent Gaussian measures). Around Gelfand's seminar in the late 1950s, this subject was especially popular and took the shape of the Gelfand--Segal scheme.  Representations of the canonical commutation relations and many other things relied upon this property of Gaussian measures. In this connection, we may mention also V.~N.~Sudakov, I.~V.~Girsanov, and others. Non-Gaussian quasi-invariant measures are also known (A.~V.~Skorokhod, A.~M.~Vershik, and others). We will state only one property, briefly considered in 
\cite{V2} and related to the space of linear measurable functionals. To every translation that sends the measure to an absolutely continuous measure there corresponds a linear continuous operator in the space of all measurable functions that sends a linearly measurable functional to its sum with some constant depending linearly on the functional. In other words, an admissible translation determines a continuous linear functional on the space of linear measurable functionals, i.e., a point of the kernel:
$$(T_hf)(x)=f(x+h)=f(x)+f(h), \quad U_{T_h}f=f+\langle c,h\rangle1. $$
Multiplying the operator $U_{T_h}$ by the square root of the density of the transformed measure with respect to the original one, we obtain a unitary operator in $L^2$. This provides a potential method for describing quasi-invariant measures as measures for which the space of linear measurable functionals admits an operator that can be extended to an operator in the whole space of measurable functions (or in $L^2$).

\subsection{Spectral measures of non-locally compact Abelian groups}

The spectral theory of Abelian locally compact groups of unitary operators is based on the theory of characters for these groups: the spectral measure of a unitary representation of such a group is a measure on the group of its continuous characters; here the general theory for an arbitrary Abelian locally compact group is little different from the case of the groups $\Bbb Z$ and $\Bbb R$.

When passing to non-locally compact Abelian groups, the situation changes, since the group of continuous characters in the ordinary sense, i.e., the group of continuous positive homomorphisms of the group to the circle, even when there are sufficiently many characters (i.e., they separate the elements of the group, which is not always the case), may not suffice for constructing the spectral theory of representations. The reason of this is exactly what was discussed above: there may be ``more'' positive definite functions on the group than probability measures on the group of continuous characters. In other words, Bochner's theorem does not hold. For example, for the additive group of an infinite-dimensional real Hilbert space $H$, Sazonov's theorem determines an additional condition on a positive definite function $f$ on $H$ that guarantees that it is the Fourier transform of some measure on the dual space (i.e., on the group of characters): this condition is exactly the continuity of $f$ in the nuclear topology on $H$, i.e., a strengthened continuity.
Note that for other Banach spaces, in general there are no such conditions, or they are not purely topological (i.e., conditions of continuity in some weakened topology). 

But the same theorem yields a recipe for constructing the spectral measure for cyclic representations of the additive group of an infinite-dimensional Hilbert space. The following theorem on the spectral decomposition of a unitary representation of the additive group of a Hilbert space is a corollary of Sazonov's theorem stated in an appropriate form.

\begin{theorem}
Every continuous unitary cyclic representation $\pi$ (i.e., a representation with a cyclic vector) of the additive group of an infinite-dimensional separable Hilbert space $H$ by operators in some separable Hilbert space $\cal H$,
$$ g \mapsto \pi(g), \quad g \in H,$$ 
admits a unique spectral realization as a direct integral with one-dimensional fibers, i.e., as a space
 $L^2_{\mu}(\bar H; \Bbb C)$ where $\bar H$ is an arbitrary nuclear extension of  $H$ and $\mu$ is a measure (spectral measure) in the space $\bar H$ defined up to absolute continuity; it is completely defined if in $\pi$ a cyclic element of norm~$1$ is chosen:
$$ {\cal H}=\int_{\bar H}^\oplus \Bbb C\, d\mu.$$
The representation operators are multiplicators:
    $$(\pi(g)F)(h)=e^{i\langle h,g\rangle}F(h),\quad F \in L^2(\bar H,\mu)= {\cal H},\quad h\in \bar H, $$
where the bilinear pairing $\langle h,g\rangle$  gives the value of a measurable linear functional $g$ on an element $h$ of the measure space $H$.
 \end{theorem}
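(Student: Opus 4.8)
The plan is to run a Gelfand--Naimark--Segal type construction adapted to the duality theory of the preceding sections, with Sazonov's theorem (in the refined form established in the realization theorem above) supplying the spectral measure. First I would fix a cyclic vector $\xi \in \mathcal{H}$ with $\|\xi\| = 1$ and form the scalar function
$$\varphi(g) = \langle \pi(g)\xi, \xi \rangle_{\mathcal{H}}, \qquad g \in H.$$
A direct computation shows that $\varphi$ is positive definite, since $\sum_{j,k} c_j \bar{c}_k \varphi(g_j - g_k) = \big\| \sum_j c_j \pi(g_j)\xi \big\|^2 \geq 0$; it is normalized by $\varphi(0) = 1$; and it is continuous on $H$ because $\pi$ is strongly continuous. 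This reduces the spectral problem to the problem of realizing $\varphi$ as the Fourier transform (characteristic functional) of a measure.

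Next I would invoke the refinement of Sazonov's theorem used in the proof of the realization theorem. Although, by the infinite-dimensional failure of Bochner's theorem emphasized above, $\varphi$ need not be the Fourier transform of any measure on $H$ itself, it does become the Fourier transform of a unique Borel probability measure $\mu$ once we pass to an (arbitrary) nuclear extension $\bar{H}$ of $H$:
$$\varphi(g) = \int_{\bar{H}} e^{i\langle h, g\rangle}\, d\mu(h),$$
where each $g \in H$ is interpreted, via the duality theory, as a measurable linear functional $h \mapsto \langle h, g\rangle$ on $(\bar{H}, \mu)$. That theorem moreover guarantees that $H = \mathrm{Lin}(\bar{H}, \mu)$ is exactly the space of all square-integrable measurable linear functionals, so the pairing $\langle h, g\rangle$ carries the meaning asserted in the statement.

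With $\mu$ in hand, I would build the intertwining unitary $W \colon \mathcal{H} \to L^2(\bar{H}, \mu)$ by setting $W(\pi(g)\xi) = e^{i\langle \cdot, g\rangle}$ on the dense linear span of $\{\pi(g)\xi : g \in H\}$. This map preserves inner products because
$$\langle \pi(g)\xi, \pi(g')\xi\rangle_{\mathcal{H}} = \varphi(g - g') = \int_{\bar{H}} e^{i\langle h, g - g'\rangle}\, d\mu(h) = \langle e^{i\langle \cdot, g\rangle}, e^{i\langle \cdot, g'\rangle}\rangle_{L^2(\bar{H},\mu)},$$
so it extends to an isometry. It is onto because cyclicity of $\xi$ makes the left-hand span dense, while on the right the exponentials $e^{i\langle \cdot, g\rangle}$ generate the whole $\sigma$-algebra of $\bar{H}$ (the one generated by the functionals in $\mathrm{Lin}(\bar{H},\mu) = H$) and hence span a dense subspace of $L^2$. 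The intertwining relation $W\pi(g) = M_{e^{i\langle \cdot, g\rangle}} W$ then follows from $\pi(g)\pi(g')\xi = \pi(g+g')\xi$ and the multiplicativity $e^{i\langle \cdot, g+g'\rangle} = e^{i\langle \cdot, g\rangle}\, e^{i\langle \cdot, g'\rangle}$, giving precisely the multiplicator form. For uniqueness I would argue that the measure class is an invariant of $\pi$: by the uniqueness clause of Sazonov's theorem a fixed normalized cyclic vector determines $\mu$ outright, while replacing $\xi$ by another normalized cyclic vector $\xi'$ replaces $\mu$ by the equivalent measure $|W\xi'|^2\, d\mu$, whose density is nonzero $\mu$-almost everywhere precisely because $\xi'$ is cyclic; hence $\mu$ is canonical up to absolute continuity.

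The main obstacle is concentrated in the second step, namely producing the spectral measure. Everything after it is the formal GNS and multiplicity-one argument, but the existence of a genuine countably additive $\mu$ is exactly where the infinite-dimensional theory bites: a norm-continuous positive definite $\varphi$ carries only a cylindrical (finitely additive) measure on $H$, and countable additivity is recovered only after the nuclear Hilbert--Schmidt embedding into $\bar{H}$ radonifies it. Matching the continuity hypothesis on $\pi$ to the continuity needed for this radonification, and confirming that the resulting $\mathrm{Lin}(\bar{H},\mu)$ is $H$ rather than a strictly larger space, are the points that genuinely require the realization theorem and Sazonov's theorem rather than a soft argument.
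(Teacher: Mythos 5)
Your proposal is correct and follows exactly the route the paper intends: the paper itself only remarks that the theorem ``is a corollary of Sazonov's theorem stated in an appropriate form'' and that ``the proof \dots{} is the same as in the classical locally compact case,'' and your GNS-type argument --- positive definite functional $\varphi(g)=\langle\pi(g)\xi,\xi\rangle$, Sazonov/Minlos radonification onto the nuclear extension $\bar H$, then the standard intertwining unitary onto $L^2(\bar H,\mu)$ with multiplicator action --- is precisely that classical argument with continuous characters replaced by the measurable characters of the duality theory. You have in effect supplied the details the paper deliberately omits, including the two genuinely infinite-dimensional points (countable additivity of $\mu$ only on $\bar H$, and the identification of the pairing with measurable linear functionals via the realization theorem).
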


Thus the difference with the classical theorem on the spectral decomposition of a unitary cyclic representation of an Abelian group is that the group of continuous additive characters over which the direct integral is taken is replaced with the group of measurable additive characters (= linear measurable functionals) studied in this paper.

The proof of the theorem and the properties of this decomposition are the same as in the classical locally compact case. The choice of the additive group of a Hilbert space as an example is due to its importance in the representation theory of non-locally compact groups (see, e.g., \cite{GGV,GGV2,Al,Go}).
 On the other hand, as we have seen, in this case the group of measurable characters has a simple description.

We conclude this section with the remark that the definition and properties of ordinary operations on measures (convolution, convex combinations, etc.), needed for the analysis of spectral measures of unitary representations, are no different from the same definitions and properties in the case of continuous characters, as long as measurable characters (= linear measurable functionals) are well defined. 

These considerations are important for the representation theory of non-locally compact groups, in particular, of non-Abelian groups of currents (see, e.g., \cite{GGV,Al}). 

\subsection{Actions of non-locally compact groups with a quasi-invariant measure and von Neumann factors}

From the viewpoint of the theory of dynamical systems, the study of actions of non-locally compact Abelian groups with a quasi-invariant measure like that considered above poses an entirely new problem, which has not apparently been investigated. In our example, this is the action of the additive group of the Hilbert space $l^2$ by translations on the elements of the kernel in the space ${\Bbb R}^{\infty}$ with the standard Gaussian measure. The problem is how to classify
orbit partitions of such actions. In the above example,  the partition in question is the partition of a space (a Hilbert space or ${\Bbb R}^{\infty}$) with a Gaussian measure into the cosets of the subspace $l^2$. This partition of a measure space is well defined  $\mod 0$; of course, it is not measurable, but it is also not isomorphic to any orbit partition of a locally compact group acting with a quasi-invariant measure.

It is also obvious that this partition is not hyperfinite (or, in other terms, ``tame''), i.e., is not the limit of a decreasing sequence of measurable partitions. The paradoxicality of this fact is that, as is well known, all actions with an invariant or quasi-invariant measure of locally compact Abelian and even amenable groups are tame (see \cite{CFW}). This even caused the author of  \cite{To} to suppose that the additive group of the space $l^2$ is nonamenable. This is hardly justified, since Abelian groups (even not locally compact) are, after all, amenable in the original (von Neumann's) sense: they carry an invariant mean. 

Recall that ergodic actions of amenable locally compact groups with an invariant measure are orbit isomorphic; this is the above-mentioned generalization of the well-known Dye theorem. Actions of such groups with a quasi-invariant measure are tame 
  \cite{CFW}, and there is a satisfactory classification of their orbit partitions and their invariants (such as the list of relations), see papers by A.~Connes, W.~Kriger, H.~Araki and Woods, A.~Vershik, and others). The question arises about an analog of these theorems, albeit in a very special setting: what are orbit invariants of ergodic actions of the additive group of $l^2$ with a quasi-invariant measure? Moreover, are ergodic actions of  the additive group of $l^2$ with different Gaussian measures isomorphic?

The problem is closely related to the classification of regular subalgebras in von Neumann factors. More exactly, consider the Koopman and von Neumann unitary representations generated by such an action. Operators of the Koopman representation act in the space
$L^2(\bar H)$ by shifts of the argument with simultaneous multiplication by the square root of the corresponding density. The spectral theory of this representation for the case of $l^2$ can be analyzed by the method described in the previous section (Sec.~5.2). The second, von Neumann, representation has a more complicated structure, it does not fit into the usual scheme of representations of skew products, since for non-locally compact groups there is no natural group  $C^*$-algebra. For locally compact groups, and, more generally, for locally compact groupoids, there is a construction generalizing the original von Neumann's construction; it defines the factor and its commutant generated by the right and left actions of the group elements and the operators of multiplication by bounded measurable functions. 

But our case differs substantially from the locally compact one in that the Hilbert space in which factors are defined is not the $L^2$ space constructed from a measure supported by the orbit equivalence relation, i.e., this is not the classical groupoid construction. Though a groupoid can be defined from the action in exactly the same way, but the measure of the equivalence relation  (regarded as a subset of the direct product of the space $\bar H$ with itself) is equal to zero, hence there is no expectation operator, and the construction of the factor is itself nontrivial. This construction is sketched in  \cite{V5}. It can also be realized via polymorphisms. The point is that the corresponding regular maximal self-adjoint Abelian subalgebra (MASA) is not a Cartan subalgebra, for which the conditional expectation exists. To the best of the author's knowledge, such subalgebras have not been studied.

The problem of establishing the hyperfiniteness of such factors and their type is already not quite trivial. In our case, it is not difficult to prove that the factor is hyperfinite. Most likely, it is of type~${\rm III}_1$, but apparently this has not yet been proved.

More generally, the question is about nonstandard maximal regular subalgebras in hyperfinite factors of type~III without conditional expectations. This is apparently a new circle of problems in the theory of von Neumann algebras. The author thanks R.~Longo and S.~Popa for confirming this opinion.

\end{document}